\definecolor{axcolor}{rgb}{.3,0,.3}
\newtheorem{corollary}{Corollary}
\newtheorem{theorem}{Theorem}
\newtheorem{lemma}{Lemma}
\newtheorem{proposition}{Proposition}
\newtheorem{question}{Question}
\newcommand{\limit}{\mathsf{Limit}} \newcommand{\diff}{\mathsf{Diff}}
\newcommand{\vx}{\mathbf{\bar x}}
\newcommand{\vy}{\mathbf{\bar y}}
\newcommand{\vz}{\mathbf{\bar z}}
\newcommand{\de}{:=}
\newcommand{\rac}{\mathbb{Q}}
\newcommand{\R}{\mathbb{R}}
\newcommand{\A}{\mathbb{A}}
\newcommand{\defiff}{\ \stackrel{\;def}{\Longleftrightarrow}\ }
\newcommand{\IOb}{\ensuremath{\mathsf{IOb}}} 
\newcommand{\Ob}{\ensuremath{\mathsf{Ob}}} 
\newcommand{\B}{\ensuremath{\mathit{B}}} 
\newcommand{\Ph}{\ensuremath{\mathsf{Ph}}} 
\newcommand{\Q}{\ensuremath{\mathit{Q}}} 
\newcommand{\W}{\ensuremath{\mathsf{W}}} 
\newcommand{\ev}{\ensuremath{\mathsf{ev}}} 
\newcommand{\dom}{\ensuremath{\mathsf{Dom}\,}} 
\newcommand{\ran}{\ensuremath{\mathsf{Ran}\,}} 
\newcommand{\lc}{\ensuremath{\mathsf{lc}}}
\newcommand{\w}{\ensuremath{\mathsf{w}}}
\newcommand{\num}{\textit{Num}}
\newcommand{\ax}[1]{\textcolor{axcolor}{\ensuremath{\mathsf{#1}}}} 
\begin{document}
\title[numbers needed for modeling accelerated observers]{What properties of numbers are needed to model accelerated observers in relativity?}

\author{Gergely~Sz\'ekely} 

\address{Alfr\'ed R\'enyi Institute of Mathematics \\ Hungarian
  Academy of Sciences \\ Re\'altanoda utca 13-15, H-1053, Budapest,
  Hungary}

\email{szekely.gergely@renyi.mta.hu}

\date{\today}

\begin{abstract}
We investigate the possible structures of numbers (as physical
quantities) over which accelerated observers can be modeled in special
relativity. We present a general axiomatic theory of accelerated
observers which has a model over every real closed field. We also show
that, if we would like to model certain accelerated observers, then
not every real closed field is suitable, e.g., uniformly accelerated
observers cannot be modeled over the field of real algebraic
numbers. Consequently, the class of fields over which uniform
acceleration can be investigated is not axiomatizable in the language
of ordered fields.
\end{abstract}

\keywords{relativity theory, accelerated observers, real closed fields, uniform acceleration, axiomatic method, first-order logic}
\maketitle
\section{Introduction}

In this paper within an axiomatic framework, we investigate the
possible structures of numbers (as physical quantities) over which
accelerated observers can be modeled in special relativity.

There are several reasons for this kind of investigations. One of them
is that we cannot experimentally verify whether the structure of
quantities is isomorphic to $\R$ (the field of real numbers). Thus we
cannot have any direct empirical support to leave out of consideration
the several other algebraic structures.  Another reason is that these
investigations lead to a deeper understanding of the relation of our
mathematical and physical assumptions.  For a more general perspective
of this research direction, see \cite{wnst}.

In general we would like to investigate the question
\begin{center}
{\it ``What structure can numbers have in a certain physical theory?''}
\end{center}
To introduce our central concept, let \ax{Th} be a theory of physics.
In this case, we can introduce notation
$\num(\ax{Th})$ for the class of the possible quantity structures of
theory \ax{Th}:
\begin{multline*}
\num(\ax{Th}):=\{\mathfrak{Q}:\mathfrak{Q} \text{ is a structure of
  quantities} \text{over which \ax{Th} has a model.}\}
\end{multline*}

In this paper, our main question of interest is that what algebraic
properties have to be satisfied by the numbers as physical quantities
if we want to model accelerated observers in special relativity.  So
we will restrict our investigation to the case when \ax{Th} is a
theory of special relativity extended with accelerated
observers. However, this question can be investigated in any other
physical theory the same way.

We introduce several theories and axioms of relativity theory.  For
example, our axiom system for $d$-dimensional special relativity
(\ax{SpecRel_d}, see p.\pageref{specrel}) captures the
kinematics of special relativity perfectly (if $d\ge3$) as it implies that the
worldview transformations between inertial observers are Poincar\'e
transformations, see \cite{wnst}.  Without any extra assumptions
\ax{SpecRel_d} has a model over every ordered field, i.e.,
\begin{equation*}
\num(\ax{SpecRel_d})=\{\mathfrak{Q}:\mathfrak{Q} \text{ is an ordered
  field} \}.
\end{equation*}
Therefore, \ax{SpecRel_d} has a model over the field of rational numbers
$\rac$, too.  However, if we also assume that inertial observes can
move with arbitrary speed less than that of light, then every positive
number has to have a square root if $d\ge 3$, see
\cite{wnst}. In particular, the number structure cannot be the
field of rational numbers, but it can be the field of real algebraic
numbers.

If we assume only that inertial observers can move only approximately
with any speed slower than that of light, then we still can model
special relativity over $\rac$, see \cite{MSz-wnst}. 

Moving toward general relativity we will see that our theory of
accelerated observes (\ax{AccRel_d}, see p.\pageref{accrel}) requires
the structure of quantities to be a real closed field, i.e., an
ordered field in which every positive number has a square root and
every odd degree polynomial has a root, see
Theorem~\ref{thm-rc}. Specially, \ax{AccRel_d} does not have a model
over $\rac$. However, any real closed field, e.g., the field of real
algebraic numbers, can be the quantity structure of \ax{AccRel_d}.

If we extend \ax{AccRel_d} by an extra axiom stating that there are
uniformly accelerated observers (\ax{Ax\exists UnifOb}, see
p.\pageref{axunifob}), then the field of real algebraic numbers cannot
be the structure of quantities anymore if $d\ge3$, see
Theorem~\ref{thm-noalg}.  A surprising consequence of this result is
that $\num(\ax{AccRel_d}+\ax{Ax\exists UnifOb})$ is not a first-order
logic axiomatizable class of fields, see Corollary~\ref{cor-nondef}. That
is, in the language of ordered fields, it is impossible to axiomatize
those fields over which uniformly accelerated observers can be
modeled.

An interesting and related approach of Stannett introduces two
structures one for the measurable numbers and one for the theoretical
numbers and assumes that the set of measurable numbers is dense in the
set of theoretical numbers, see \cite{Mike}.

We chose first-order predicate logic to formulate our axioms because
experience (e.g., in geometry and set theory) shows that this logic is
the best logic for providing an axiomatic foundation for a theory. A
further reason for choosing first-order logic is that it is a well
defined fragment of natural language with an unambiguous syntax and
semantics, which do not depend on set theory. For further reasons,
see, e.g., \cite[\S Why FOL?]{pezsgo}, \cite{ax}, \cite[\S 11]{SzPhd},
\cite{vaananen}, \cite{wolenski}.

\section{The language of our theories}
\label{lang-s}

To our investigation, we need an axiomatic theory of spacetimes.\footnote{In this paper, we
will use the language and axiom systems of \cite{Synthese}.} The first
important decision in writing up an axiom system is to choose the set
of basic symbols of our logic language, i.e., what objects and
relations between them we will use as basic concepts.

Here we will use the following two-sorted\footnote{That our theory is
  two-sorted means only that there are two types of basic objects
  (bodies and quantities) as opposed to, e.g., Zermelo--Fraenkel set
  theory where there is only one type of basic objects (sets).}
language of first-order logic (FOL) parametrized by a natural number
$d\ge 2$ representing the dimension of spacetime:
\begin{equation}
\{\B,\Q\,; \Ob, \IOb, \Ph,+,\cdot,\le,\W\},
\end{equation}
where $\B$ (bodies) and $\Q$ (quantities) are the two sorts,
$\Ob$ (observers), $\IOb$ (inertial observers) and $\Ph$ (light
signals) are one-place relation symbols of
sort $\B$, $+$ and $\cdot$ are two-place function symbols of sort
$\Q$, $\le$ is a two-place relation symbol of sort $\Q$, and $\W$ (the
worldview relation) is a $d+2$-place relation symbol the first two
arguments of which are of sort $\B$ and the rest are of sort $\Q$.

Relations $\Ob(o)$, $\IOb(m)$ and $\Ph(p)$ are translated as
``\textit{$o$ is an observer},'' ``\textit{$m$ is an inertial
  observer},'' and ``\textit{$p$ is a light signal},''
respectively. To speak about coordinatization of observers, we
translate relation $\W(k,b,x_1,x_2,\ldots,x_d)$ as ``\textit{body $k$
  coordinatizes body $b$ at space-time location $\langle x_1,
  x_2,\ldots,x_d\rangle$},'' (i.e., at space location $\langle
x_2,\ldots,x_d\rangle$ and instant $x_1$).

{\bf Quantity  terms} are the variables of sort $\Q$ and what can be
built from them by using the two-place operations $+$ and $\cdot$,
{\bf body terms} are only the variables of sort $\B$.
$\IOb(m)$, $\Ph(p)$, $\W(m,b,x_1,\ldots,x_d)$, $x=y$, and $x\le y$
where $m$, $p$, $b$, $x$, $y$, $x_1$, \ldots, $x_d$ are arbitrary
terms of the respective sorts are so-called {\bf atomic formulas} of
our first-order logic language. The {\bf formulas} are built up from
these atomic formulas by using the logical connectives \textit{not}
($\lnot$), \textit{and} ($\land$), \textit{or} ($\lor$),
\textit{implies} ($\rightarrow$), \textit{if-and-only-if}
($\leftrightarrow$) and the quantifiers \textit{exists} ($\exists$)
and \textit{for all} ($\forall$).

To make them easier to read, we omit the outermost universal
quantifiers from the formalizations of our axioms, i.e., all the free
variables are universally quantified.

We use the notation $\Q^n$ for the set of all $n$-tuples of elements
of $\Q$. If $\vx\in \Q^n$, we assume that $\vx=\langle
x_1,\ldots,x_n\rangle$, i.e., $x_i$ denotes the
$i$-th component of the $n$-tuple $\vx$. Specially, we write $\W(m,b,\vx)$ in
place of $\W(m,b,x_1,\dots,x_d)$, and we write $\forall \vx$ in place
of $\forall x_1\dots\forall x_d$, etc.

We use first-order logic set theory as a meta theory to speak about model
theoretical terms, such as models, validity, etc.  The {\bf models} of
this language are of the form
\begin{equation}
{\mathfrak{M}} = \langle \B, \Q;\Ob_\mathfrak{M}, \IOb_\mathfrak{M},\Ph_\mathfrak{M},+_\mathfrak{M},\cdot_\mathfrak{M},\le_\mathfrak{M},\W_\mathfrak{M}\rangle,
\end{equation}
where $\B$ and $\Q$ are nonempty sets, $\Ob_\mathfrak{M}$,
$\IOb_\mathfrak{M}$ and $\Ph_\mathfrak{M}$ are subsets of $\B$,
$+_\mathfrak{M}$ and $\cdot_\mathfrak{M}$ are binary functions and
$\le_\mathfrak{M}$ is a binary relation on $\Q$, and $\W_\mathfrak{M}$
is a subset of $\B\times \B\times \Q^d$.  Formulas are interpreted in
$\mathfrak{M}$ in the usual way.  For the precise definition of the
syntax and semantics of first-order logic, see, e.g., \cite[\S
  1.3]{CK}, \cite[\S 2.1, \S 2.2]{End}.

\section{Numbers required by special relativity}
\label{ax-s}
 
First we formulate axioms for special relativity concerning inertial
observers only in the logic language of Section~\ref{lang-s}.

The key axiom of special relativity states that the speed of light is
the same in every direction for every inertial observers.

\begin{itemize}
\item[] \underline{\ax{AxPh}}: For any inertial observer, the speed of
  light is the same everywhere and in every direction (and it is
  finite). Furthermore, it is possible to send out a light signal in
  any direction (existing according to the coordinate system)
  everywhere:
\begin{multline}
\IOb(m)\rightarrow \exists c_m\Big[c_m>0\land \forall \vx\vy
 \Big(\exists p \big[\Ph(p)\land \W(m,p,\vx)\land
    \W(m,p,\vy)\big] \\\leftrightarrow (x_2-y_2)^2+\ldots +(x_d-y_d)^2=
  c_m^2\cdot(x_1-y_1)^2\Big)\Big].\footnotemark
\end{multline}
\end{itemize}
\footnotetext{That is, if $m$ is an inertial observer, there is a is a
  positive quantity $c_m$ such that for all coordinate points $\vx$
  and $\vy$ there is a light signal $p$ coordinatized at $\vx$ and
  $\vy$ by observer $m$ if and only if equation $ (x_2-y_2)^2+\ldots
  +(x_d-y_d)^2= c_m^2\cdot(x_1-y_1)^2$ holds.}  

To get back the intended meaning of axiom \ax{AxPh} (or even to be
able to define subtraction from addition), we have to assume some
properties of numbers. 

In our next axiom, we state some basic properties of addition,
multiplication and ordering true for real numbers.

\begin{itemize}
\item[]
\underline{\ax{AxOField}}:
 The quantity part $\langle \Q,+,\cdot,\le \rangle$ is an ordered field, i.e.,
\begin{itemize}
\item[$\bullet$]  $\langle\Q,+,\cdot\rangle$ is a field in the sense of abstract
algebra; and
\item[$\bullet$] 
the relation $\le$ is a linear ordering on $\Q$ such that  
\begin{itemize}
\item[i)] $x \le y\rightarrow x + z \le y + z$ and 
\item[ii)] $0 \le x \land 0 \le y\rightarrow 0 \le xy$
holds.
\end{itemize}
\end{itemize}
\end{itemize}

Using axiom \ax{AxOFiled} instead of assuming that the structure of
quantities is the field of real numbers not just makes our theory more
flexible, but also makes it possible to meaningfully investigate our
main question.  Another reason for using \ax{AxOField} instead of $\R$
is that we cannot experimentally verify whether the structure of
physical quantities are isomorphic to $\R$. Hence the assumption that
the structure of quantities is $\R$ cannot have a direct empirical
support.  The two properties of real numbers which are the most
difficult to defend from empirical point of view are the Archimedean
property, see \cite{Rosinger08}, \cite[\S 3.1]{Rosinger09},
\cite{Rosinger11a}, \cite{Rosinger11b}, and the supremum
property,\footnote{The supremum property (i.e., that every nonempty
  and bounded subset of the numbers has a least upper bound) implies
  the Archimedean property. So if we want to get ourselves free from
  the Archimedean property, we have to leave this one, too.} see the
remark after the introduction of \ax{CONT} on p.\pageref{p-cont}.

We also have to support \ax{AxPh} with the assumption that all
observers coordinatize the same ``external" reality (the same set of
events).  By the {\bf event} occurring for observer $m$ at point
$\vx$, we mean the set of bodies $m$ coordinatizes at $\vx$:
\begin{equation}
\ev_m(\vx)\de\{ b : \W(m,b,\vx)\}.
\end{equation}

\begin{itemize}
\item[]
\underline{\ax{AxEv}}
All inertial observers coordinatize the same set of events:
\begin{equation*}
\IOb(m)\land\IOb(k)\rightarrow  \exists \vy\, \forall b
\big[\W(m,b,\vx)\leftrightarrow\W(k,b,\vy)\big].
\end{equation*}
\end{itemize}
From now on, we will use $\ev_m(\vx)=\ev_k(\vy)$ to abbreviate the
subformula $\forall b [\W(m,b,\vx)\leftrightarrow\W(k,b,\vy)]$ of
\ax{AxEv}. 

The three axioms above are enough to capture the essence of special
relativity. However, let us assume two more simplifying axioms.

\begin{itemize}
\item[]\underline{\ax{AxSelf}}
Any inertial observer is stationary relative to himself:
\begin{equation*}
\IOb(m)\rightarrow \forall \vx\big[\W(m,m,\vx) \leftrightarrow x_2=\ldots=x_d=0\big].
\end{equation*}
\end{itemize}
Our last axiom on inertial observers is a symmetry axiom saying that
they use the same units of measurement.

\begin{itemize}
\item[]\underline{\ax{AxSymD}}
Any two inertial observers agree as to the spatial distance between
two events if these two events are simultaneous for both of them;
furthermore, the speed of light is 1 for all observers:
\begin{multline*}
\IOb(m)\land\IOb(k) \land x_1=y_1\land x'_1=y'_1\land
\ev_m(\vx)=\ev_k(\vx') \land \ev_m(\vy)=\ev_k(\vy') \rightarrow\\
(x_2-y_2)^2+\ldots +(x_d-y_d)^2=(x'_2-y'_2)^2+\ldots +(x'_d-y'_d)^2
\text{ and }\\
\IOb(m)\rightarrow\exists
p\big[\Ph(p)\land\W(m,p,0,\ldots,0)\land\W(m,p,1,1,0,\ldots,0)\big].
\end{multline*}
\end{itemize}

Let us introduce an axiom system for special relativity as the collection
of the five axioms above:
\begin{equation*}\label{specrel}
\ax{SpecRel_d} \de \ax{AxPh}+\ax{AxOField}+ \ax{AxEv}+\ax{AxSelf}+
\ax{AxSymD}.
\end{equation*}
Streamlined axiom system \ax{SpecRel_d} perfectly captures the
kinematics of special relativity since it implies that the worldview
transformations between inertial observers are Poincar\'e
transformations, see \cite{wnst}.

\section{Numbers implied by accelerated observers}

Now we are going to investigate what happens with the possible
structures of quantities if we extend our theory \ax{SpecRel_d} with
accelerated observers. To do so, let us recall our first-order logic
axiom system of accelerated observers \ax{AccRel_d}.  The key axiom of
\ax{AccRel_d} is the following:
\begin{itemize}
\item[]\underline{\ax{AxCmv}}
 At each moment of its worldline, each observer
sees the nearby world for a short while as an inertial
   observer does.  
\end{itemize}
For formalization of \ax{AxCmv} in the first-order language of
Section~\ref{lang-s}, see \cite{SzPhd}. In \ax{AccRel_d} we will also
use the following localized version of axioms \ax{AxEv} and
\ax{AxSelf} of \ax{SpecRel_d}.

\begin{itemize}
\item[]\underline{\ax{AxEv^-}} Observers coordinatize all the events in which they participate:
\begin{equation*}
\Ob(k)\land  \W(m,k,\vx)\rightarrow\exists
\vy\enskip \ev_m(\vx)=\ev_k(\vy).
\end{equation*}
\end{itemize}

\begin{itemize}
\item[]\underline{\ax{AxSelf^-}} In his own worldview, the worldline
  of any observer is an interval of the time-axis containing all the
  coordinate points of the time-axis where the observer sees
  something:
\begin{multline*}
 \big[\W(m,m,\vx)\rightarrow x_2=\ldots=x_d=0\big]
 \land\\ \big[\W(m,m,\vy)\land\W(m,m,\vz)\land x_1<t<y_1\rightarrow
   \W(m,m,t,0,\ldots,0)\big] \land\\ \exists b \big[
   \W(m,b,t,0,\ldots,0) \rightarrow \W(m,m,t,0,\ldots,0)\big].
\end{multline*}
\end{itemize}

Let us now introduce a promising theory of accelerated observers as
\ax{SpecRel_d} extended with the three axioms above.
\begin{equation*}
\ax{AccRel^0_d}\de \ax{SpecRel_d}+ \ax{AxCmv}+\ax{AxEv^-}+\ax{AxSelf^-}
\end{equation*}

Axiom \ax{AxCmv} ties the behavior of accelerated observers to the
inertial ones and \ax{SpecRel_d} captures the kinematics of special
relativity perfectly (it implies that the worldview transformations
between inertial observers are Poincar\'e transformations, see
\cite{wnst}). Therefore, it is quite natural to hope that
\ax{AccRel^0_d} is a strong enough theory of accelerated observers to
prove the most fundamental results about accelerated observers.
However, \ax{AccRel^0_d} does not imply even the most basic
predictions about accelerated observers such as the twin paradox or
that stationary observers measure the same time between two events
\cite{twp}, \cite[\S 7]{SzPhd}. Moreover, it can be proved that even
if we add the whole first-order logic theory of real numbers to
\ax{AccRel^0_d} is not enough to get a theory that implies the twin
paradox, see \cite{twp}, \cite[\S 7]{SzPhd}.

In the models of \ax{AccRel^0_d} in which \ax{TwP} is not true, there
are some definable gaps in $\Q$. Our axiom scheme \ax{CONT} excludes
these gaps.
\begin{itemize}
\label{p-cont}
\item[]\underline{\ax{CONT}} Every  parametrically definable,
  bounded and nonempty subset of $\Q$ has a supremum (i.e., least upper bound) with respect to $\le$.
\end{itemize}
\noindent In \ax{CONT} ``definable'' means ``definable in the language
of \ax{AccRel_d}, parametrically.'' For a precise formulation of \ax{CONT}  in the first-order language of
Section~\ref{lang-s},
see \cite[p.692]{twp} or \cite[\S 10.1]{SzPhd}.

Axiom scheme \ax{CONT} makes the supremum postulate of real numbers
closer to the physical/empirical level because \ax{CONT} speaks only
about ``physically meaningful'' sets of the quantities which can be defined
in the language of our (physical) theory and not ``any fancy subset.''

Our axiom scheme of continuity (\ax{CONT}) is Tarski's first-order
logic version of Hilbert's continuity axiom in his axiomatization of
geometry, see \cite[pp.161-162]{Gol}, fitted to the language of
\ax{AccRel_d}.

When $\Q$ is the ordered field of real numbers, \ax{CONT} is automatically
true.
Let us introduce our axioms system \ax{AccRel_d} as the extension of \ax{AccRel^0_d} by axiom scheme \ax{CONT}. 
\begin{equation*}\label{accrel}
\ax{AccRel_d}\de\ax{AccRel^0_d} + \ax{CONT}
\end{equation*}

It can be proved that axiom system \ax{AccRel_d} implies the twin
paradox, see \cite{twp}, \cite[\S 7.2]{SzPhd}.

An ordered field is called {\bf real closed field} if a first-order
logic sentence of the language of ordered fields is true in it exactly
when it is true in the field of real numbers, or equivalently if it is
an ordered field in which every positive number has a square root and
every polynomial of odd degree has a root in it, see, e.g.,
\cite{tarski-dmethod}.

Axiom scheme \ax{CONT} is so powerful that it implies that the
possible structures of quantities have to be real closed fields:
\begin{theorem}\label{thm-rc} For all $d\ge 2$, 
\begin{equation*}
\num(\ax{AccRel_d})=\{\mathfrak{Q}: \mathfrak{Q} \text{ is real
    closed fields}\}.
\end{equation*}
\end{theorem}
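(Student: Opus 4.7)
The proof has to go in two directions: first that every model of $\ax{AccRel_d}$ has a real closed field as its quantity part, and second that every real closed field arises as the quantity part of some model of $\ax{AccRel_d}$. The axiom $\ax{AxOField}$ already gives us an ordered field, so for the first direction it remains to show that positive quantities have square roots and that every odd degree polynomial has a root, and this is where the continuity scheme $\ax{CONT}$ does the work. For the second direction, the strategy is to construct an explicit Minkowskian model over any real closed field and verify each axiom, relying on Tarski's quantifier elimination to handle $\ax{CONT}$.

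For the first direction, fix a model of $\ax{AccRel_d}$ and a positive $a\in\Q$. The set $S_a\de\{x\in\Q : 0\le x\land x^2\le a\}$ is definable with parameter $a$ by a quantity formula, hence definable in the language of $\ax{AccRel_d}$; it is nonempty (contains $0$) and bounded above (by $a+1$, say), so by $\ax{CONT}$ it has a supremum $s$. The standard order-field manipulation rules out both $s^2<a$ and $s^2>a$, so $s^2=a$. For an odd degree polynomial $p(x)=x^{2k+1}+a_{2k}x^{2k}+\cdots+a_0$, choose $M>0$ large enough (using that powers grow faster than lower-degree terms in an ordered field, e.g., $M>1+\sum|a_i|$) so that $p(-M)<0<p(M)$; then $\{x\in\Q : -M\le x\le M\land p(x)\le 0\}$ is definable, bounded, nonempty, and its supremum $r$ satisfies $p(r)=0$ because the polynomial $p$ is a definable continuous function and $\ax{CONT}$ yields the intermediate value property in the usual way.

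For the second direction, given a real closed field $\mathfrak{F}$, construct a model whose quantity part is $\mathfrak{F}$ as follows. Let bodies be triples of the appropriate kinds: inertial observers are Poincar\'e transformations of $\mathfrak{F}^d$; photons are light-like straight lines; observers are suitably smooth timelike curves in $\mathfrak{F}^d$ (one convenient choice is piecewise-analytic or just piecewise-linear-plus-smooth curves whose tangent is everywhere timelike of unit length). Define $\W$ so that in its own worldview each observer's worldline is the time axis traversed by arclength, and so that worldview transformations between inertial observers are the obvious Poincar\'e transformations. Axioms $\ax{AxPh}$, $\ax{AxOField}$, $\ax{AxEv}$, $\ax{AxSelf}$, $\ax{AxSymD}$ then hold in the usual Minkowski model (using that $\mathfrak{F}$ has square roots for defining the Lorentz boosts); $\ax{AxCmv}$ holds by choosing observers whose comoving inertial frame at each point of the worldline is a well-defined inertial observer in our list; and $\ax{AxEv^-}$, $\ax{AxSelf^-}$ are immediate. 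For $\ax{CONT}$, use that every formula of the language of $\ax{AccRel_d}$ translates, under this coordinate model, into a formula of the language of ordered fields (possibly with the new predicate unfolded out to its defining condition over $\mathfrak{F}$); since $\mathfrak{F}$ is real closed, Tarski's theorem gives quantifier elimination, so every parametrically definable subset of $\mathfrak{F}$ is a finite union of points and intervals, and such sets plainly have suprema when bounded and nonempty.

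The main obstacle is the construction side: one must pick the class of accelerated worldlines carefully enough that $\ax{AxCmv}$ is satisfied (every point has a comoving inertial observer from the model) yet the whole structure remains first-order interpretable over $\mathfrak{F}$ so that $\ax{CONT}$ reduces to ordered-field definability. Over $\R$ one could just take all $C^1$ timelike curves, but over a general real closed field there is no canonical $C^1$; the cleanest fix is to take a class closed under the operations needed (e.g., curves defined piecewise by terms in the language extended with a chosen smooth parameterization, or simply all timelike curves admitting an instantaneous comoving Lorentz frame at every point, since the latter is what $\ax{AxCmv}$ requires). Checking that this class is rich enough while every $\ax{AccRel_d}$-definable subset of $\Q$ remains definable over the ordered field $\mathfrak{F}$ is the technical core; once this is secured, both inclusions of $\num(\ax{AccRel_d})=\{\mathfrak{Q}:\mathfrak{Q}\text{ is real closed}\}$ follow.
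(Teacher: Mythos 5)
The paper itself does not contain a proof of this theorem; it only cites \cite{wnst}, so no line-by-line comparison is possible. Judged on its own terms, your first direction is correct and essentially complete: the sets $\{x: 0\le x\land x^2\le a\}$ and $\{x\in[-M,M]: p(x)\le 0\}$ are parametrically definable by quantity formulas, \ax{CONT} supplies their suprema, and the standard order-field continuity arguments (valid for polynomials over any ordered field) force those suprema to be the square root of $a$, respectively a root of the odd-degree polynomial. That is exactly the intended use of \ax{CONT}.

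The second direction is where you have a gap, and you correctly locate it but do not close it. Your plan needs the body sort to contain a rich class of accelerated timelike curves satisfying \ax{AxCmv} while keeping the whole model first-order interpretable over $\mathfrak{F}$, so that every \ax{CONT}-instance translates into the language of ordered fields and Tarski's quantifier elimination applies. As stated (``all timelike curves admitting an instantaneous comoving Lorentz frame at every point'') this is a second-order collection over a general real closed field, and with such a body sort a \ax{CONT}-instance may quantify over bodies in a way that no longer reduces to ordered-field definability; the step you flag as ``the technical core'' is precisely the one left unverified. The fix is much simpler than what you attempt: \ax{AccRel_d} does not assert the existence of any non-inertial observer (that is what the separate axiom \ax{Ax\exists UnifOb} is for), so you may take $\Ob=\IOb$ and let every body be coded by a tuple of elements of $\mathfrak{F}$ --- inertial observers as Poincar\'e-transformed coordinate systems, photons as lightlike lines. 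Then \ax{AxCmv}, \ax{AxEv^-} and \ax{AxSelf^-} hold for inertial observers essentially for free, the model is parametrically interpretable in $\mathfrak{F}$, and your quantifier-elimination argument for \ax{CONT} goes through verbatim. With that replacement your argument is sound; without it, the existence of the required class of accelerated worldlines remains an unproved claim at the heart of the right-to-left inclusion.
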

\noindent
For the proof of Theorem~\ref{thm-rc}, see \cite{wnst}.

\begin{question}
Can \ax{CONT} be replaced in \ax{AccRel_d} with some natural
assumptions such that they (together with \ax{AccRel^0_d}) imply all
(or certain) important predictions of relativity theory about
accelerated observers (e.g., the twin paradox) yet they do not
require that the structure of quantities is a real closed field.
\end{question}

\section{Numbers implied by uniformly accelerated observers}

In paper \cite{wnst}, we have seen that assuming existence of
observers can ensure the existence of numbers.  So let us investigate
another axiom of this kind, which postulates the existence of
uniformly accelerated observers. To introduce this axiom, let us
define the {\bf life-curve} $\lc_{km}$ of observer $k$ according to
observer $m$ as the worldline of $k$ according to $m$ {\it
  parametrized by the time measured by $k$},\label{life-curve}
formally:
\begin{equation}
\lc_{km}\de\{\langle t,\vx \rangle :
\exists \vy\enskip k\in \ev_k(\vy)=\ev_m(\vx)\land y_1=t\}.
\end{equation}

Now we can introduce our axiom ensuring the existence of uniformly
accelerated observers.
\begin{itemize}
\label{axunifob}
\item[]\underline{\ax{Ax\exists UnifOb}} It is possible to accelerate
  an observer uniformly:\footnote{In relativity theory, uniformly
    accelerated observers are moving along hyperbolas, see, e.g.,
    \cite[\S 3.8, pp.37-38]{dinverno}, \cite[\S 6]{MTW}, \cite[\S
      12.4, pp.267-272]{Rindler}.}
\begin{multline*}
\IOb(m)\rightarrow \exists k \Big[ \Ob(k) \land \dom \lc_{km}=\Q\land \lc_{km}(0)=\vy\land \lc_{km}(1)_1>y_1\land
  \forall \vx \\\big[ \vx\in \ran \lc_{km} \leftrightarrow
    (x_2-y_2)^2-(x_1-y_1)^2=a^2\land x_3-y_3=\ldots=x_d-y_d=0\big]\Big].
\end{multline*}
\end{itemize}

We use the notation $\mathfrak{Q}\in\num(\ax{Th})$ for algebraic
structure $\mathfrak{Q}$ the same way as the model theoretic notation
$\mathfrak{Q}\in Mod(\ax{AxField})$, e.g., $\rac\in\num(\ax{Th})$
means that $\rac$, the field of rational numbers, can be the
structure of quantities (numbers) in \ax{Th}.

Let $\A\cap \R$ denote the ordered field of real algebraic numbers.
Theorem~\ref{thm-noalg} states that the ordered field of algebraic real
numbers cannot be the structure of quantities of theory \ax{AccRel_d}
+ \ax{Ax\exists UnifOb} if $d\ge3$:
\begin{theorem}\label{thm-noalg}
Let $d\ge 3$. Then 
\begin{equation*}\A\cap \R\not\in\num(\ax{AccRel_d} +
\ax{Ax\exists UnifOb}).
\end{equation*}
\end{theorem}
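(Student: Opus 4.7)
Suppose, for contradiction, that $\A\cap\R\in\num(\ax{AccRel_d}+\ax{Ax\exists UnifOb})$, and fix such a model with quantity structure $\Q=\A\cap\R$. The plan is to apply $\ax{Ax\exists UnifOb}$ to produce a life-curve parametrizing a hyperbola by proper time, identify the parametrization as the standard hyperbolic one using the axioms of $\ax{AccRel_d}$, and obtain a contradiction from the transcendence of $e$.

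First I would apply $\ax{Ax\exists UnifOb}$ to some inertial $m$ to obtain an observer $k$ whose life-curve $\lc_{km}$ is defined on all of $\A\cap\R$ and whose range is a definite hyperbola $H$ lying in a translate of the $(x_1,x_2)$-plane, with semi-axis some nonzero $a\in\A\cap\R$. After choosing coordinates I may assume $H=\{\vx:x_2^2-x_1^2=a^2,\ x_3=\ldots=x_d=0\}$, that $k$ lies on the upper branch, and that $\lc_{km}(0)$ is the vertex $(0,a,0,\ldots,0)$ with the orientation condition $\lc_{km}(1)_1>0$.

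The central step, and the main obstacle, is to show that $\lc_{km}(\tau)=(a\sinh(\tau/a),\,a\cosh(\tau/a),0,\ldots,0)$ for every $\tau\in\A\cap\R$. This identification must be extracted from the axioms rather than from calculus. I would proceed via $\ax{AxCmv}$: at every proper time $\tau$ there should be an inertial observer co-moving with $k$, whose velocity relative to $m$ is forced to be the Minkowski-unit future-directed tangent to $H$ at $\lc_{km}(\tau)$. The Poincar\'e-transformation structure enforced by $\ax{SpecRel_d}$ then constrains the parametrization to be an arc-length (i.e.\ proper-time) parametrization of $H$, and $\ax{CONT}$ together with the boundary data at $\tau=0$ and the orientation condition pins it down uniquely as the standard hyperbolic parametrization above. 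This is essentially the observation that $\lc_{km}$ satisfies the same initial-value problem as hyperbolic motion, whose solution is unique in any real closed field.

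Once the parametrization is established the contradiction is immediate. Evaluating at $\tau=a\in\A\cap\R\setminus\{0\}$, the coordinates $a\sinh 1$ and $a\cosh 1$ of $\lc_{km}(a)$ must lie in $\A\cap\R$; hence $\sinh 1$ and $\cosh 1$ are algebraic, so $e=\cosh 1+\sinh 1$ is algebraic, contradicting Hermite's theorem on the transcendence of $e$ (a special case of Lindemann--Weierstrass). Therefore no model of $\ax{AccRel_d}+\ax{Ax\exists UnifOb}$ with $\Q=\A\cap\R$ can exist, establishing the theorem.
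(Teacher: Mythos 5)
Your overall strategy matches the paper's: use $\ax{Ax\exists UnifOb}$ to obtain a proper-time parametrization of a hyperbola, argue it must be the standard hyperbolic one, and derive a contradiction from the transcendence of $e$. The final step and the identification of the life-curve up to reparametrization (your ``$\ax{CONT}$ plus boundary data pins it down'') are sound and correspond to the paper's Lemma~\ref{lem-ulc}, Proposition~\ref{prop-tlc} and Theorems~\ref{thm-sh}--\ref{thm-exp}.

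However, there is a genuine gap at your central step, and the justification you offer for it is not valid. You assert that $\lc_{km}$ ``satisfies the same initial-value problem as hyperbolic motion, whose solution is unique in any real closed field.'' First, uniqueness of solutions to an ODE is not a first-order property of ordered fields and does not transfer to arbitrary real closed fields; what $\ax{CONT}$ buys you is uniqueness among \emph{definable} curves within the given model. Second, and more importantly, the map $\tau\mapsto(a\sinh(\tau/a),a\cosh(\tau/a),0,\ldots,0)$ is not even a candidate solution over $\A\cap\R$: its values at nonzero algebraic arguments are transcendental, so it is not a function into $(\A\cap\R)^d$. The statement you need is not ``the solution over $\A\cap\R$ equals the real hyperbolic parametrization'' but rather ``any definable differentiable function over $\A\cap\R$ satisfying this differential condition, viewed as a partial function on $\R$, must agree with the restriction of the real solution'' --- from which nonexistence follows. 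Proving this requires extending the function from the dense subfield $\A\cap\R$ to all of $\R$ by continuity and verifying that the extension still satisfies the ODE (so that it equals $c\exp$, forcing $E(1)=e$). That extension argument, carried out with explicit $\varepsilon$--$\delta$ estimates, is the content of the paper's Proposition~\ref{prop-exp} and is the analytic heart of the proof; your proposal omits it entirely. Without it, the evaluation $\lc_{km}(a)=(a\sinh 1, a\cosh 1,\ldots)$ on which your contradiction rests is unjustified.
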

The proof of Theorem~\ref{thm-noalg} is in Section~\ref{proof-s} on
p.\pageref{proof-noalg}.
    
Since the ordered fields of real numbers and real algebraic numbers
are elementarily equivalent, Theorem~\ref{thm-noalg} implies that
the quantity part of the models of \ax{AccRel_d} + \ax{Ax\exists UnifOb}
is not an elementary class:\footnote{Of course, it is a pseudo
  elementary class, i.e., it is a class of the reducts of an elementary
  class.}

\begin{corollary}\label{cor-nondef}
Let $d\ge 3$. Then $\num(\ax{AccRel_d} + \ax{Ax\exists UnifOb}) $ is
not axiomatizable in the language of ordered fields.
\end{corollary}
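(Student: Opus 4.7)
The strategy is the classical model-theoretic one: show that the quantity part of $\ax{AccRel_d}+\ax{Ax\exists UnifOb}$ distinguishes two ordered fields that are elementarily equivalent in the language of ordered fields, and conclude by a standard compactness/transfer argument that no first-order axiomatization in that language can carve out this class.

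First I would check that $\R$ itself belongs to $\num(\ax{AccRel_d}+\ax{Ax\exists UnifOb})$. By Theorem~\ref{thm-rc}, $\R$ is already in $\num(\ax{AccRel_d})$, so the only remaining task is to exhibit, in the standard real model, an observer whose life-curve traces out the hyperbola demanded by \ax{Ax\exists UnifOb}. This is the textbook Rindler/hyperbolic motion construction: given an inertial observer $m$ with origin $\vy$, one adds to the model an observer $k$ whose worldline according to $m$ is $(x_2-y_2)^2-(x_1-y_1)^2=a^2$ in the $(t,x)$-plane (with the remaining spatial coordinates fixed), parametrized by proper time so that $\dom\lc_{km}=\R$ and $\lc_{km}(1)_1>y_1$. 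One then verifies that augmenting the model by such observers preserves \ax{AxCmv}, \ax{AxEv^-}, \ax{AxSelf^-}, and \ax{CONT}. This is routine over $\R$ because the relevant arc-length and inverse-trig functions are available.

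Next, I would invoke the classical theorem of Tarski that the theory of real closed fields is complete; in particular $\R$ and $\A\cap\R$ are elementarily equivalent in the language $\{+,\cdot,\le\}$. Now suppose, toward contradiction, that there is a first-order theory $T$ in the language of ordered fields such that for every ordered field $\mathfrak{Q}$,
\begin{equation*}
\mathfrak{Q}\in\num(\ax{AccRel_d}+\ax{Ax\exists UnifOb})\ \Longleftrightarrow\ \mathfrak{Q}\models T.
\end{equation*}
By the previous paragraph $\R\models T$, and by elementary equivalence $\A\cap\R\models T$ as well, so $T$ forces $\A\cap\R\in\num(\ax{AccRel_d}+\ax{Ax\exists UnifOb})$. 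This directly contradicts Theorem~\ref{thm-noalg}, completing the proof.

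The only real work is the first step: producing the Rindler observer inside the axiomatic framework and checking that the resulting structure still satisfies every axiom of $\ax{AccRel_d}$, in particular \ax{CONT}. Once that is in hand, the transfer from $\R$ to $\A\cap\R$ via Tarski's completeness is a one-line application of elementary equivalence, and Theorem~\ref{thm-noalg} supplies the required contradiction.
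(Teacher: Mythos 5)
Your argument is correct and is essentially the paper's own: the paper derives the corollary from Theorem~\ref{thm-noalg} together with the elementary equivalence of $\R$ and $\A\cap\R$ (Tarski), implicitly using that $\R$ itself lies in $\num(\ax{AccRel_d}+\ax{Ax\exists UnifOb})$. Your only addition is to make explicit the (genuinely needed, but routine over $\R$) verification that the Rindler observer can be added while preserving all axioms, a step the paper leaves tacit.
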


By Theorem~\ref{thm-noalg}, we know that not every real closed field
can be the quantity structure of \ax{AccRel_d} + \ax{Ax\exists UnifOb},
e.g., it cannot be the field of real algebraic numbers.  However, the
problem that exactly which ordered fields can be the quantity
structures of \ax{AccRel_d} + \ax{Ax\exists UnifOb} is still open:
\begin{question}\label{que-uob}
Exactly which ordered fields are the elements of class
$\num(\ax{AccRel_d} + \ax{Ax\exists UnifOb})$?
\end{question}
\noindent
We can also ask what properties of numbers do axiom \ax{Ax\exists
  UnifOb} requires without \ax{CONT}:
\begin{question}\label{que-uob0}
Exactly which ordered fields are the elements of class
$\num(\ax{AccRel^0_d} + \ax{Ax\exists UnifOb})$?
\end{question}
Theorem~\ref{thm-exp} below suggests that the answer to
Questions~\ref{que-uob} and \ref{que-uob0} may have something to do
with ordered exponential fields, see \cite[\S 4]{DW},
\cite{SK}.

\section{Proof of Theorem~\ref{thm-noalg}}
\label{proof-s}

In this section, we prove Theorem~\ref{thm-noalg}. To do so, let us
introduce some concepts.  The {\bf space component} of $\vx\in\Q^d$ is
defined as
\begin{equation}
\vx_s\de \langle x_2,\ldots, x_d\rangle.
\end{equation} The (signed)
{\bf Minkowski length} of $\vx\in \Q^d$ is
\begin{equation}
\mu(\vx)\de \left\{
\begin{array}{rll}
\sqrt{\rule{0pt}{11pt} x_1^2-|\vx_s|^2} & \text{ if}\quad x_1^2\ge|\vx_s|^2, \\
-\sqrt{\rule{0pt}{11pt}|\vx_s|^2-x_1^2} & \text{ in other cases, } 
\end{array}
\right.
\end{equation}
and the {\bf Minkowski distance} between $\vx$ and $\vy$ is
$\mu(\vx,\vy)\de\mu(\vx-\vy).$ We use the signed version of the
Minkowski length because it contains two kinds of information: (i) the
length of $\vx$, and (ii) whether it is spacelike, lightlike or
timelike.  Let $H\subseteq\Q$. We say that $H$ is an {\bf interval}
iff $z\in H$ when there are $x,y\in H$ such that $x<z<y$.  We say that
a function $\gamma:H \rightarrow \Q^d$ is a {\bf curve} if $H$ is an
interval and has at least two distinct elements.

The usual (first-order logic) formula can be used to define the
differentiability function over any ordered field
$\mathfrak{Q}$. The {\bf derivative of} function $f:\Q\rightarrow\Q^n$ is
$A\in\Q^n$ at $x_0\in \dom f$:
\begin{multline}
\diff(f,x_0,A)\defiff \forall \varepsilon>0\; \exists \delta
>0 \;\forall x \enskip 0<|x-x_0|<\delta \\\land x\in \dom f \rightarrow
|f(x)-f(x_0)-A(x-x_0)|<\varepsilon|x-x_0|.
\end{multline}
In the case when there is one and only one $A$ such that
$\diff(f,x_0,A)$ holds, we write $f'(x_0)=A$. It can be proved that
there is at most one $A$ such that $\diff(f,x_0,A)$ holds if $\dom f$
is open, see \cite[Thm.10.3.9]{SzPhd}

A curve $\gamma$ is called {\bf timelike curve} iff it is
differentiable, and $\gamma'(t)$ is timelike, i.e.,
$\mu\big(\gamma'(t)\big)>0$, for all $t\in \dom \gamma$.
We call a timelike curve $\alpha$
{\bf well-parametrized} if
$\mu\big(\alpha'(t)\big)=1$ for all $t\in \dom \alpha$.

\begin{theorem}\label{thm-wp}
Let $d\ge 3$.
Assume \ax{AccRel_d}.
Let $k$ be an observer and $m$ be an {\it inertial} observer.
Then $\lc_{km}$ is a well-parametrized timelike curve.
\end{theorem}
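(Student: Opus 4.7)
The plan is to study the life-curve locally around each $t_0 \in \dom \lc_{km}$ by comparing $k$'s worldview with that of a comoving inertial observer $m_0$ supplied by $\ax{AxCmv}$; the worldview transformation from $m_0$'s coordinates to $m$'s coordinates is a Poincar\'e transformation by the structure theorem for $\ax{SpecRel_d}$ (valid since $d \ge 3$), so $\lc_{km}$ inherits both differentiability and the unit Minkowski length from the Lorentz part of that Poincar\'e transformation.

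First I would settle the domain structure. By $\ax{AxSelf^-}$, the set of $\vy$ with $k \in \ev_k(\vy)$ is an interval of the $t$-axis, of the form $\{(t,0,\ldots,0) : t \in I\}$ for some interval $I \subseteq \Q$. For each $t \in I$, $\ax{AxEv^-}$ (combined with the injectivity of $\ev_m$ for inertial $m$, a consequence of $\ax{SpecRel_d}$) yields a unique $\vx$ with $\ev_m(\vx) = \ev_k(t,0,\ldots,0)$, namely $\lc_{km}(t)$. Hence $\dom \lc_{km}$ is a subinterval of $I$; assuming $k$'s worldline is non-degenerate (which is the relevant nontrivial case), it contains at least two points, so $\lc_{km}$ qualifies as a curve in the sense of the excerpt.

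The heart of the argument is differentiability at a fixed $t_0 \in \dom \lc_{km}$. Apply $\ax{AxCmv}$ at the event $\ev_k(t_0,0,\ldots,0)$ to obtain an inertial observer $m_0$ whose worldview agrees with $k$'s to first order at this event; concretely, if $w_{km_0}$ denotes the map sending $\vy$ to the unique $\vx$ with $\ev_{m_0}(\vx) = \ev_k(\vy)$, then $w_{km_0}$ is differentiable at $(t_0,0,\ldots,0)$ with the identity as derivative (after choosing $m_0$'s origin at the shared event). Since $m_0$ and $m$ are both inertial, the worldview transformation $w_{m_0 m}$ is Poincar\'e, say $w_{m_0 m}(\vx) = \vp + L\vx$ for some Lorentz matrix $L$ and translation $\vp$. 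Composing and restricting to the $t$-axis,
\begin{equation*}
\lc_{km}(t) \;=\; (w_{m_0 m} \circ w_{km_0})(t,0,\ldots,0),
\end{equation*}
so $\lc_{km}$ is differentiable at $t_0$ with derivative $L e_1$, where $e_1 = (1,0,\ldots,0)$. Because Lorentz transformations preserve the Minkowski form and $\mu(e_1) = 1$, we obtain $\mu(\lc_{km}'(t_0)) = 1 > 0$, establishing both timelikeness and well-parametrization.

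I expect the main obstacle to lie in the clean first-order execution of this differentiability step: stating $\ax{AxCmv}$ precisely enough to extract the identity derivative of $w_{km_0}$ at $(t_0,0,\ldots,0)$, applying a chain rule for the composition with an affine Poincar\'e map, and reducing differentiability in $d$ variables to the one-variable derivative along the $t$-axis, with every step carried out inside the language of $\ax{AccRel_d}$ rather than by appeal to ambient real analysis. The underlying calculus is elementary over the real closed field ensured by Theorem~\ref{thm-rc}, but the real work is in justifying each move directly from the axioms.
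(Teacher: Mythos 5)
The paper does not actually prove this theorem; it defers entirely to \cite[Thm.6.1.11]{SzPhd}, and your sketch follows exactly the strategy of that proof: factor the worldview transformation $\w_{km}$ as $\w_{m_0m}\circ \w_{km_0}$ through the co-moving inertial observer $m_0$ given by \ax{AxCmv}, use that \ax{AxCmv} makes the derivative of $\w_{km_0}$ at the touching point the identity, that $\w_{m_0m}$ is a Poincar\'e transformation by the \ax{SpecRel_d} structure theorem (this is where $d\ge 3$ enters), and conclude $\lc_{km}'(t_0)=Le_1$ with $\mu(Le_1)=\mu(e_1)=1$ by Lorentz invariance of $\mu$. So the approach is the right one and essentially the intended one. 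The one place where your write-up is weaker than the theorem it claims to prove is the parenthetical ``assuming $k$'s worldline is non-degenerate (which is the relevant nontrivial case)'': the theorem has no such hypothesis, so for a complete proof you must derive from the axioms that $\dom\lc_{km}$ is an interval with at least two elements (otherwise $\lc_{km}$ is not a curve and the derivative at an isolated point is not even unique). This is not automatic from what you cite: \ax{AxSelf^-} only constrains the \emph{shape} of $k$'s self-worldline, and \ax{AxEv^-} as stated transfers events from $m$'s worldview into $k$'s, so getting every event $\ev_k(t,0,\ldots,0)$ to occur in $m$'s worldview (and hence $t\in\dom\lc_{km}$) requires a further argument, e.g.\ routing through the co-moving inertial observer and \ax{AxEv}. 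Your closing remark correctly identifies that the bulk of the labor is in the first-order execution of the chain rule and the derivative extraction from \ax{AxCmv}; the domain analysis deserves to be on that list as well.
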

\noindent
For the proof of Theorem~\ref{thm-wp}, see \cite[Thm.6.1.11]{SzPhd}.

A part of real analysis can be generalized for arbitrary ordered
fields without any real difficulty, see \cite[\S 10]{SzPhd}. However,
a certain fragment of real analysis can only be generalized within
first-order logic for \textit{definable} functions and their proofs
need axiom schema \ax{CONT}. We refer to these generalizations by
marking them ``\ax{CONT}-.'' The first-order logic generalizations of
some theorems, such as Chain Rule can be proved without \ax{CONT}, so
they are naturally referred to without the ``\ax{CONT}-'' mark.

\begin{lemma}\label{lem-mon}
Assume \ax{AxOField} and \ax{CONT}.  Let $f:\Q\rightarrow\Q$ be an
injective definable continuous function. Then $f$ is also monotonous.
\end{lemma}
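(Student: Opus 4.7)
The plan is to adapt the classical real-analysis argument, with the intermediate value theorem (IVT) in its \ax{CONT}-version as the main tool. First I would invoke a \ax{CONT}-IVT for definable continuous functions: if $g\colon[a,b]\to\Q$ is definable and continuous with $g(a)<y<g(b)$, then some $c\in(a,b)$ satisfies $g(c)=y$. The proof is the textbook one---take $c\de\sup\{x\in[a,b]:g(x)\le y\}$, which exists by \ax{CONT} since the set is definable, nonempty and bounded---and continuity of $g$ forces $g(c)=y$. Since $f$ is definable and continuous, IVT applies to $f$ (and to any function obtained by restricting $f$ to a definable interval) on $\Q$.

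The heart of the proof is a three-point lemma: for any $a<b<c$ in $\Q$, the value $f(b)$ strictly lies between $f(a)$ and $f(c)$. Suppose otherwise; say $f(b)>\max\{f(a),f(c)\}$. Pick any $y$ with $\max\{f(a),f(c)\}<y<f(b)$. Applying IVT to $f$ on $[a,b]$ and on $[b,c]$ yields points $p\in(a,b)$ and $q\in(b,c)$ with $f(p)=f(q)=y$, contradicting injectivity. The case $f(b)<\min\{f(a),f(c)\}$ is symmetric. Note that $f(b)\ne f(a)$ and $f(b)\ne f(c)$ by injectivity, so ``strictly between'' is automatic once we rule out the two outside-cases.

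With the three-point lemma in hand, monotonicity follows by a short case analysis. Fix any $a_0<b_0$ in $\Q$; by injectivity either $f(a_0)<f(b_0)$ or $f(a_0)>f(b_0)$. Assume the former (the other case gives strictly decreasing by the same argument, applied to $-f$). Given arbitrary $a<b$, arrange the distinct elements of $\{a_0,b_0,a,b\}$ in increasing order and apply the three-point lemma along consecutive triples. Each application transmits the ``middle value lies strictly between the two extremes'' conclusion, so the orientation of the strict inequality established on $(a_0,b_0)$ propagates to $(a,b)$, yielding $f(a)<f(b)$.

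The main obstacle is merely bookkeeping: checking that the sets used by \ax{CONT} in the IVT step are definable (they are, as they are built from $f$, $\le$ and parameters), and handling the finitely many possible orderings of $\{a,b,a_0,b_0\}$ in the last step (including the degenerate cases where some coincide). Both are routine; the entire substance of the result is packaged into the \ax{CONT}-IVT, which is precisely where the supremum axiom scheme does its work.
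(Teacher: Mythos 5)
Your proof is correct: the \ax{CONT}-intermediate value theorem (obtained by taking the supremum of a definable bounded set), the three-point lemma via injectivity, and the propagation of orientation constitute the standard argument, and every step goes through over an arbitrary ordered field satisfying \ax{CONT}. The paper itself does not prove this lemma but delegates it to \cite[Thm.10.2.4]{SzPhd}, where essentially this same Bolzano-theorem-based route is taken, so your approach matches the intended one.
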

\noindent
\noindent
For the proof of Lemma~\ref{lem-mon}, see \cite[Thm.10.2.4]{SzPhd}.

\begin{proposition}\label{prop-tlc}
Assume \ax{CONT} and \ax{AxOField}. Let
$\gamma,\delta:\Q\rightarrow \Q^d$ be definable and differentiable
well-parametrized timelike curves such that
$\ran\gamma=\ran\delta$. Then there are $\varepsilon\in\{-1,+1\}$
and $c\in\Q$ such that $\delta(t)=\gamma(\varepsilon t+c)$ for all
$t\in\Q$.
\end{proposition}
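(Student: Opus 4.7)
The plan is to define the reparametrization $\phi: \Q \to \Q$ by $\phi(t) := \gamma^{-1}(\delta(t))$ and to show $\phi(t) = \varepsilon t + c$ for suitable $\varepsilon \in \{-1, +1\}$ and $c \in \Q$. First I would check that $\gamma$ and $\delta$ are injective so that $\phi$ is well-defined. Since $\gamma$ is well-parametrized and timelike, $\mu(\gamma'(t)) = 1$ gives $\gamma_1'(t)^2 \ge 1$, so $\gamma_1'$ never vanishes; the \ax{CONT}-version of the mean value theorem then makes $\gamma_1$ strictly monotonic, hence $\gamma$ injective. The same applies to $\delta$, and combined with $\ran\gamma = \ran\delta$ this makes $\phi$ a definable bijection of $\Q$ onto itself.

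Next I would derive continuity, monotonicity, and differentiability of $\phi$. Writing $\phi(t) = \gamma_1^{-1}(\delta_1(t))$ and using that the inverse of a continuous strictly monotonic function is continuous, $\phi$ is a definable continuous injection, so Lemma~\ref{lem-mon} gives that $\phi$ is monotonic. The \ax{CONT}-versions of the inverse-function rule (applicable because $\gamma_1'$ is nowhere zero) and the chain rule further imply that $\phi$ is differentiable, with $\phi'(t) = \delta_1'(t)/\gamma_1'(\phi(t))$.

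To pin down $\phi'$, I would apply the chain rule to the identity $\delta = \gamma \circ \phi$ to obtain $\delta'(t) = \phi'(t)\cdot\gamma'(\phi(t))$. Since Minkowski length is positively homogeneous on timelike vectors and both curves are well-parametrized, taking $\mu$ of both sides yields $1 = |\phi'(t)|\cdot 1$, so $\phi'(t) \in \{+1,-1\}$ at every $t$. Monotonicity of $\phi$ now pins down the sign: if $\phi$ is nondecreasing then $\phi' \ge 0$, forcing $\phi' \equiv +1$, and analogously $\phi' \equiv -1$ if $\phi$ is nonincreasing. Applying the mean value theorem to $\phi(t) - \varepsilon t$ (whose derivative is identically zero) then gives $\phi(t) = \varepsilon t + c$ with $c := \phi(0)$, and substituting back yields $\delta(t) = \gamma(\varepsilon t + c)$.

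The main obstacle is bookkeeping rather than conceptual: every analytic tool invoked --- MVT, chain rule, inverse-function rule, continuity of inverses, and Lemma~\ref{lem-mon} --- is a \ax{CONT}-version valid only for \emph{definable} functions, so I must verify at each step that the relevant objects ($\gamma^{-1}$, $\phi$, $\phi'$, etc.) are definable in the language of Section~\ref{lang-s}. This is a routine consequence of $\gamma$ and $\delta$ being definable, but the tracking of definability is the technically delicate part of the argument.
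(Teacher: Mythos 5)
Your argument is correct and follows essentially the same route as the paper: both reduce the statement to producing a definable differentiable reparametrization $h$ with $\delta=\gamma\circ h$ and $|h'|=1$, then fix the sign of $h'$ globally, and finally integrate to get $h(t)=\varepsilon t+c$. The only difference is one of packaging --- the paper obtains $h$ in one stroke by citing \cite[Lem.10.5.4]{SzPhd} and fixes the sign via \ax{CONT}-Darboux and the \ax{CONT}-fundamental theorem of integration, whereas you construct $h=\gamma^{-1}\circ\delta$ explicitly (supplying the injectivity, inverse-function and chain-rule steps yourself) and use Lemma~\ref{lem-mon} plus the mean value theorem instead.
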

\begin{proof}
By \cite[Lem.10.5.4]{SzPhd}, we have that there is a (definable)
differentiable function $h:\Q\rightarrow\Q$ such that $|h'|=1$ and
$\delta(t)=\gamma\big(h(t)\big)$ for all $t\in\Q$. By
\ax{CONT}-Darboux's Theorem~\cite[p.110]{SzPhd}, $h'(t)=1$ for all
$t\in\Q$ or $h'(t)=-1$ for all $t\in\Q$. By the \ax{CONT} version of
the fundamental theorem of integration~\cite[Prop.10.3.19]{SzPhd},
$h(t)=t+c$ or $h(t)=-t+c$ for some $c\in\Q$.
\end{proof}

\begin{lemma}\label{lem-ulc}
Let $d\ge3$.
Assume \ax{AccRel_d}.
Let $m_1$, $m_2$ be inertial observers and let  $k_1$ and $k_2$ be observers such that 
\begin{enumerate}
\item\label{dom} $\dom\lc_{k_1m_1}=\dom\lc_{k_2m_2}=\Q$, 
\item\label{ran} $\ran\lc_{m_1k_1}=\ran\lc_{m_2k_2}$,
\item\label{sp} $\lc_{k_1m_1}(0)=\lc_{k_2m_2}(0)$, 
\item\label{pd} $\lc_{k_1m_1}(1)_1>\lc_{k_1m_1}(0)_1$ and $\w_{k_2m_2}(1)_1>\w_{k_1m_1}(0)_1$.
\end{enumerate}
Then $\lc_{k_1m_1}=\lc_{k_2m_2}$.
\end{lemma}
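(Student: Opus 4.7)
The strategy is to apply Proposition~\ref{prop-tlc} to the two life-curves to obtain an affine reparametrization relating them, and then use the initial-value data in (3)--(4) to force that reparametrization to be the identity.

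By Theorem~\ref{thm-wp}, both $\lc_{k_1m_1}$ and $\lc_{k_2m_2}$ are definable, well-parametrized timelike curves defined on all of $\Q$. In order to compare them I first reduce to the case $m_1=m_2$: composing $\lc_{k_1m_1}$ with the worldview transformation $w_{m_1m_2}:\Q^d\to\Q^d$, which is a Poincar\'e transformation by \ax{SpecRel_d}, produces $\lc_{k_1m_2}$, and Poincar\'e transformations preserve well-parametrization, timelikeness, and equality of ranges as subsets of $\Q^d$; the four hypotheses are stable under this replacement. From now on I take $m_1=m_2=:m$, so that hypothesis (2) reads $\ran\lc_{k_1m}=\ran\lc_{k_2m}$.

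Proposition~\ref{prop-tlc} applied to $\gamma=\lc_{k_1m}$ and $\delta=\lc_{k_2m}$ produces $\varepsilon\in\{-1,+1\}$ and $c\in\Q$ with
\[\lc_{k_2m}(t)=\lc_{k_1m}(\varepsilon t+c)\qquad\text{for all }t\in\Q.\]
Evaluating at $t=0$ and invoking (3) gives $\lc_{k_1m}(c)=\lc_{k_1m}(0)$. To conclude $c=0$ it suffices to show that the time component $t\mapsto \lc_{k_1m}(t)_1$ is strictly monotonic. Well-parametrization forces $(\lc_{k_1m}'(t)_1)^2=1+|\lc_{k_1m}'(t)_s|^2\ge 1$, so the derivative of the time component is nowhere zero, and by \ax{CONT}-Darboux it has constant sign; the first half of (4) fixes that sign to be positive. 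Hence $\lc_{k_1m}(\cdot)_1$ is strictly increasing and $c=0$.

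From the second half of (4) together with (3) we get $\lc_{k_1m}(\varepsilon)_1=\lc_{k_2m}(1)_1>\lc_{k_2m}(0)_1=\lc_{k_1m}(0)_1$, so strict monotonicity yields $\varepsilon=+1$, and the two life-curves coincide on $\Q$. The main obstacle I anticipate is the first step, the frame reduction: one has to verify carefully that hypothesis (2) (as literally written, with subscripts interchanged) transports across $w_{m_1m_2}$ to the single-frame equality of ranges that Proposition~\ref{prop-tlc} requires, and that (3)--(4) survive the replacement. The remainder is routine bookkeeping around Proposition~\ref{prop-tlc} plus the Darboux-style monotonicity argument, both of which are standard in this axiomatic setting.
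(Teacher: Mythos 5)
Your overall strategy is exactly the paper's: apply Proposition~\ref{prop-tlc} to the two life-curves (which are well-parametrized timelike curves defined on all of $\Q$ by Theorem~\ref{thm-wp}), then use hypothesis \eqref{sp} to kill the shift $c$ and hypothesis \eqref{pd} to kill the reflection $\varepsilon$. The second half of your argument is fine, and in fact slightly more careful than the paper's: you establish strict monotonicity of the time component (via well-parametrization, \ax{CONT}-Darboux, and the first half of \eqref{pd}) \emph{before} deducing $c=0$ from $\gamma(c)=\gamma(0)$, whereas the paper asserts $c=0$ first and only afterwards records the injectivity that justifies it.

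The one step you should delete is the frame reduction, and you are right to flag it as the weak point. It is unnecessary: Proposition~\ref{prop-tlc} is a purely analytic statement about definable curves $\Q\to\Q^d$ with equal ranges, and hypotheses \eqref{dom}--\eqref{ran} (reading the subscripts in \eqref{ran} as $\ran\lc_{k_1m_1}=\ran\lc_{k_2m_2}$, which is clearly the intended statement and is how the paper's own proof reads it) already place $\lc_{k_1m_1}$ and $\lc_{k_2m_2}$ in exactly that position, regardless of the fact that they are expressed in different observers' worldviews. Worse, the reduction as you describe it does not work: replacing $\lc_{k_1m_1}$ by $\lc_{k_1m_2}=w_{m_1m_2}\circ\lc_{k_1m_1}$ does \emph{not} preserve hypothesis \eqref{sp} (the common value $\lc_{k_1m_1}(0)=\lc_{k_2m_2}(0)$ is moved by $w_{m_1m_2}$ on one side of the equation only) nor hypothesis \eqref{ran} (the common range is moved on one side only), and even if you then proved $\lc_{k_1m_2}=\lc_{k_2m_2}$ you would still not have the stated conclusion $\lc_{k_1m_1}=\lc_{k_2m_2}$ unless $w_{m_1m_2}$ were the identity. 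Strike that paragraph, apply Proposition~\ref{prop-tlc} directly to $\gamma=\lc_{k_1m_1}$ and $\delta=\lc_{k_2m_2}$, and the rest of your proof stands as written and coincides with the paper's.
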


\begin{proof}
We are going to prove our statement by applying
Proposition~\ref{prop-tlc} to $\lc_{k_1m_1}$ and $\lc_{k_2m_2}$ So let
$\gamma\de\lc_{k_1m_1}$ and $\delta\de\lc_{k_2m_2}$.  By
Theorem~\ref{thm-wp}, $\gamma$ and $\delta$ are well-parametrized
timelike curves.  By assumptions \eqref{dom} and \eqref{ran}
$\dom\gamma=\dom\delta=\Q$ and $\ran\gamma=\ran\delta$. Therefore, by
Proposition~\ref{prop-tlc}, there is a $c\in\Q$ and
$\varepsilon\in\{-1,+1\}$ such that $\delta(t)=\gamma(\varepsilon
t+c)$ for all $t\in\Q$.  By assumption \eqref{sp},
$\gamma(0)=\delta(0)$. Therefore, $c=0$. Since $\gamma$ and $\delta$
are timelike curves $\gamma_1$ and $\delta_1$ are either strictly
increasing or strictly decreasing functions. By assumption \eqref{pd},
$\gamma(1)_1>\gamma(0)_1$ and $\delta(1)_1>\delta(0)_1$. Thus both
$\gamma_1$ and $\delta_1$ are strictly increasing. Consequently,
$\gamma'(0)_1>0$ and $\delta'(0)_1>0$. Therefore, $\varepsilon$ cannot
be negative. Hence we have that $\varepsilon=1$. Consequently,
$\gamma=\delta$ as it was stated.
\end{proof}

\begin{theorem}\label{thm-sh}
Let $d\ge 3$. Assume \ax{AccRel_d} and \ax{Ax\exists UnifOb}. There are
definable differentiable functions $S:\Q\rightarrow\Q$ and
$C:\Q\rightarrow\Q$ with the following properties:
\begin{enumerate}
\item\label{s1} $C^2-S^2=1$,
\item\label{s2} $S(0)=0$ and $C(0)=1$,
\item\label{s3} $S(1)>0$, 
\item\label{ssym} $C(-t)=C(t)$ and $S(-t)=S(t)$ for all $t\in\Q$,
\item\label{smu} $(S')^2-(C')^2=1$,
\item\label{sdiff} $C'=S$ and $S'=C$,
\item\label{smon} $S$ is strictly increasing on $\Q$; and $C$ are
  strictly increasing on interval $[0,\infty)$ and strictly decreasing
    on $(-\infty,0]$.
\item\label{sran} $\ran S=\Q$ and $\ran C=[1,\infty)$.
\end{enumerate}
\end{theorem}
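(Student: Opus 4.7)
\emph{Proof plan.} The idea is to extract $S$ and $C$ from the life-curve of a uniformly accelerated observer. Fix any inertial observer $m$ and apply \ax{Ax\exists UnifOb} with parameters chosen so that the target hyperbola lies in the $x_1x_2$-plane with unit parameter and vertex $\vo$; this yields an observer $k$. Set $\gamma\de\lc_{km}$, $S(t)\de\gamma(t)_1$, and $C(t)\de\gamma(t)_2$; both functions are definable with parameters $k$ and $m$. Items~(1), (2), (3) are then immediate from the corresponding clauses of \ax{Ax\exists UnifOb} (the range equation, the normalization $\lc_{km}(0)=\vy$, and the sign condition $\lc_{km}(1)_1>y_1$). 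For~(5), Theorem~\ref{thm-wp} tells us that $\gamma$ is a differentiable well-parametrized timelike curve, so $(S')^2-(C')^2=\mu(\gamma')^2=1$.

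For the symmetry~(4)---which surely should read $S(-t)=-S(t)$ rather than $S(-t)=S(t)$, since $S(0)=0$ and $S(1)>0$ already rule out $S$ being even---the plan is to apply Proposition~\ref{prop-tlc} to $\gamma$ and to the reflected reparametrization $\tilde\gamma(t)\de\langle -S(-t),C(-t),0,\dots,0\rangle$. Since $x_1\mapsto -x_1$ is a Minkowski isometry preserving the hyperbola and $t\mapsto -t$ preserves well-parametrization and differentiability, $\tilde\gamma$ is again a definable differentiable well-parametrized timelike curve with $\ran\tilde\gamma=\ran\gamma$. Proposition~\ref{prop-tlc} then gives $\tilde\gamma(t)=\gamma(\varepsilon t+c)$ for some $\varepsilon\in\{-1,+1\}$ and $c\in\Q$; evaluating at $t=0$ together with injectivity of $\gamma$ (its time-coordinate $S$ is strictly monotone because $|S'|\ge 1$, via the \ax{CONT}-Darboux theorem) forces $c=0$, while $\varepsilon=-1$ would give $S\equiv 0$, contradicting~(3). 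Hence $\varepsilon=+1$, which is exactly (4).

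The heart of the argument is~(6). Differentiating~(1) yields $CC'=SS'$, so $(S',C')$ is proportional to $(C,S)$; writing $(S',C')=\lambda\cdot(C,S)$ and substituting into~(5) gives $\lambda^2=1$ pointwise. From $C^2=1+S^2\ge 1$, $C(0)=1$, and a \ax{CONT}-intermediate-value argument one gets $C\ge 1$ globally, so $\lambda=S'/C$ is well-defined throughout; invoking \ax{CONT}-Darboux rules out $\lambda$ jumping between $+1$ and $-1$, and the initial condition $S'(0)>0$ (forced by $S(0)=0<S(1)$) pins $\lambda\equiv +1$, giving $S'=C$ and $C'=S$. Given~(6), (7) is immediate ($S'=C\ge 1>0$ gives strict monotonicity of $S$, while $C'=S$ has the sign of $S$), and for~(8) one combines the range clause of \ax{Ax\exists UnifOb} (so $\ran\gamma$ is the whole hyperbola) with Theorem~\ref{thm-rc}: $\Q$ is real closed under \ax{AccRel_d}, so $\sqrt{1+s^2}\in\Q$ for every $s\in\Q$ and $\sqrt{c^2-1}\in\Q$ for every $c\ge 1$, making the two coordinate projections of the hyperbola surjective onto $\Q$ and $[1,\infty)$ respectively.

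The main obstacle I expect is~(6): upgrading the pointwise algebraic relation $\lambda^2=1$ to the global identity $\lambda\equiv +1$ is not an algebraic manipulation but requires the \ax{CONT}-calculus (intermediate-value and Darboux-type results for definable functions) together with careful sign-tracking at $t=0$ to rule out the spurious parametrization $S'=-C,\ C'=-S$. Everything else either drops out of \ax{Ax\exists UnifOb} and Theorem~\ref{thm-wp}, or reduces to a clean application of Proposition~\ref{prop-tlc} and Theorem~\ref{thm-rc}.
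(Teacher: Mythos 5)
Your proposal is correct and follows essentially the same route as the paper: extract $\gamma=\lc_{km}$ from \ax{Ax\exists UnifOb}, get well-parametrization and differentiability from Theorem~\ref{thm-wp}, use Proposition~\ref{prop-tlc} for the symmetry item, and combine the algebraic consequences of (1) and (5) with \ax{CONT}-Darboux sign-tracking at $t=0$ to pin down $S'=C$ and $C'=S$. The only differences are organizational --- the paper first builds a canonical $\gamma$ via the uniqueness Lemma~\ref{lem-ulc} rather than fixing one witness pair $(m,k)$, and it establishes the monotonicity item before Item~(6) rather than after --- and you are right that Item~(4) as printed is a typo for $S(-t)=-S(t)$, which is exactly what the paper's own proof derives.
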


\begin{proof}
Let binary relation on observers $H$ be defined as
\begin{multline}
H(m,k)\defiff\IOb(m)\land \Ob(k)\land\dom
\lc_{km}(t)=\Q\land
\lc_{km}(0)=\langle0,1,0\ldots0\rangle\\\land \lc_{km}(1)_1>0\land
\forall \vx \big[ \vx\in \ran \lc_{km} \leftrightarrow
  x_2^2-x_1^2=1\land
  x_3=\ldots=x_d=0\big].
\end{multline}
Let $\gamma$ be defined as the following relation:
\begin{equation}
\gamma(t)=\vx\defiff \forall mk\enskip H(m,k) \land\lc_{km}(t)=\vx.
\end{equation}
By axiom \ax{Ax\exists UnifOb}, there are such observers $k$ and $m$
that $H(m,k)$ holds.  Therefore relation $\gamma$ is not empty.  By
Lemma~\ref{lem-ulc}, $\gamma$ is a function and it equals to
$\lc_{km}$ for any observers $k$ and $m$ for which relation $H(m,k)$
holds.

$\dom\gamma=\Q$ by $\dom \lc_{km}(t)=\Q$.  By Theorem~\ref{thm-wp},
$\gamma$ is a well-parametrized timelike curve.

Let $C=\gamma_2$ and $S=\gamma_1$. Then $C:\Q\rightarrow\Q$ and
$S:\Q\rightarrow\Q$ are definable differentiable functions since they
are coordinate functions of definable differentiable function
$\gamma(t)=\langle S(t),C(t),0\ldots0\rangle$.

Item \eqref{s1} holds since $\ran\gamma=\{\vx: x_2^2-x_1^2=1\land
x_3=\ldots=x_d=0\}$, Item \eqref{s2} holds since
$\gamma(0)=\langle0,1,0\ldots0\rangle$, and Item \eqref{s3} holds
since $\gamma_1(1)>0$ by the definition of $H(m,k)$.  Item~\eqref{smu}
holds since $\gamma'=\langle S',C',0,\ldots,0\rangle$ because $\gamma$
is well-parametrized.

To prove Item \eqref{ssym}, let us consider curve $\delta:t\mapsto
\langle -S(t), C(t),0\ldots,0\rangle$. It is clear that $\dom \delta
=\dom \gamma$. By Item \eqref{s1}, $\ran\delta =\ran \gamma$.  It is
clear that $\delta$ is also a well-parametrized curve by
Item~\eqref{smu}. Therefore, by Proposition~\ref{prop-tlc},
$\delta(t)=\gamma(\varepsilon t +c)$ for all $t\in\Q$. By
Item~\eqref{s1}, $\delta(0)=\gamma(0)$. Thus $c=0$. By Chain Rule,
$\delta'(t)=\varepsilon\gamma'( \varepsilon t)$. Since both $\delta$
and $\gamma$ are well-parametrized curves,
$\delta(0)=\gamma(0)=\langle0,1,0,\ldots,0\rangle$, and the tangent
line of Hyperbola $\{\vx:x_1^2-x_1^2=1,x_3=\ldots=x_d\}$ is vertical,
we have that $\gamma'_1(0)=0$ and $\gamma'_2(0)=0$. Thus
$\delta(0)=\langle-1,0,\ldots,0\rangle$. Hence $\varepsilon=-1$. Thus
$\delta(t)=\gamma(-t)$. Consequently, $-S(t)=S(-t)$ and $C(t)=C(-t)$.

By Lemma~\ref{lem-mon}, $S$ and $C$ are monotonous on interval $[0,s]$
for all $0<s\in\Q$. Hence they are also monotonous on interval
$[0,\infty)$. By Item \eqref{smu}, $(\gamma'_1)^2\ge1>0$. Therefore,
  by \ax{CONT}-Darboux Theorem, see \cite[\S 10.3]{SzPhd}, $S'(t)>0$
  for all $t\in\Q$ or $S'(t)<0$ for all $t\in\Q$.
  $\gamma(1)_1>\gamma(0)_1$ by Items \eqref{s2} and
  \eqref{s3}. Therefore, $\gamma_t$ is increasing. Thus
  $\gamma'_t>0$. So $S$ is strictly increasing on $\Q$.  Item
  \eqref{s1}, $C$ is strictly increasing on $[0,\infty)$ and strictly
    decreasing on $(-\infty,0]$ since $S$ strictly increasing on $\Q$.

Now let us prove Item \eqref{sdiff}. We have $S'^2-C'^2=1$ by
Item~\eqref{sdiff}. By Chain Rule, if we
differentiate both sides of this equation, we get that
$2SS'-2CC'=0$. Hence $C'C=S'S$.  Multiplying $S'^2-C'^2=1$ by $S^2$,
we get $S'^2S^2-C'^2S^2=S^2$. From this we get $C'^2(C^2-S^2)=S^2$ by
$C'C=S'S$.  Therefore, $C'^2=S^2$ since $C^2-S^2=1$. Consequently,
$C'(t)=\pm S(t)$ and $S'(t)=\pm C(t)$ for all $t\in\Q$. By Items
\eqref{s1} and \eqref{smon}, $S'(t)>0$ and
$C(t)>0$. Therefore, $S'=C$. If $t>0$, a similar argument show that
$C'(t)=S(t)$. By \eqref{ssym}, $-C'(t)=C'(-t)$ and
$S(-t)=-S(t)$. Therefore, $C'(t)=S(t)$ also if $t\in(-\infty, 0]$. By
  Item \eqref{s1} and \eqref{smu}, $C'(0)=0$ and $S'(0)=1$. Hence
  $C'(t)=S(t)$ for all $t\in\Q$.

By \ax{CONT}-Boltzano Theorem\cite[\S 10.2]{SzPhd}, $\ran S$ and $\ran
C$ are intervals. Therefore Item \eqref{sran} holds by Item \ref{s1}.
\end{proof}

\begin{theorem}\label{thm-exp} 
Let $d\ge 3$. Assume \ax{AccRel_d} and \ax{Ax\exists UnifOb}. There is a
definable differentiable function $E:\Q\rightarrow\Q$ with the
following properties:
\begin{enumerate}
\item\label{e2} $E(0)=1$, 
\item\label{e3} $E(1)>0$,
\item\label{esym}  $E(-t)E(t)=1$,
\item\label{ediff}  $E'=E$,
\item\label{eran} $\ran E=(0,\infty)$, and
\item\label{emon} $E$ is strictly increasing.
\end{enumerate}
\end{theorem}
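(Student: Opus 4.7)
The plan is to define $E(t) \de C(t) + S(t)$, where $S$ and $C$ are the functions produced by Theorem~\ref{thm-sh}, and to read off the six desired properties of $E$ from the corresponding properties of $S$ and $C$. Definability and differentiability of $E$ are inherited from those of $S$ and $C$. The ``easy'' items fall out by direct computation: \eqref{e2} is immediate from $S(0)=0$ and $C(0)=1$; item \eqref{ediff} is
\begin{equation*}
E' \;=\; S' + C' \;=\; C + S \;=\; E
\end{equation*}
by item \eqref{sdiff}; and \eqref{esym} reduces, via the symmetries $C(-t)=C(t)$ and $S(-t)=-S(t)$ (as derived in the proof of item \eqref{ssym}) together with the hyperbolic identity $C^2-S^2=1$ of item \eqref{s1}, to
\begin{equation*}
E(-t)\,E(t) \;=\; \big(C(t)-S(t)\big)\big(C(t)+S(t)\big) \;=\; C(t)^2 - S(t)^2 \;=\; 1.
\end{equation*}

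The next step I would carry out is to deduce positivity of $E$, which then unlocks both \eqref{emon} and \eqref{e3}. Since $E(-t)\,E(t)=1$, $E$ never vanishes; combined with $E(0)=1>0$ and the continuity of $E$, the \ax{CONT}-Bolzano intermediate value theorem forces $E(t)>0$ for every $t\in\Q$. Together with $E'=E$ this gives $E'>0$ everywhere, so $E$ is strictly increasing (establishing \eqref{emon}), and then $E(1)>E(0)=1>0$ yields \eqref{e3}.

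The main obstacle is \eqref{eran}: \ax{CONT}-Bolzano guarantees $\ran E$ is an interval contained in $(0,\infty)$, but one must show this interval actually exhausts all of $(0,\infty)$ and does not stop short at either end. My plan for this is to exploit the stronger surjectivity statement $\ran S=\Q$ from item \eqref{sran}, together with the algebraic identity $(C+S)(C-S)=1$. Given any $y\in(0,\infty)$, the candidate preimage is obtained by solving $\sqrt{1+s^2}+s=y$, which admits the explicit rational solution $s\de(y^2-1)/(2y)\in\Q$. By item \eqref{sran} there exists $t\in\Q$ with $S(t)=s$; by $C^2-S^2=1$ and the sign information in items \eqref{s1} and \eqref{smon} (which together yield $C\ge 1>0$ on all of $\Q$) we have $C(t)=\sqrt{1+s^2}$; and a short substitution gives $E(t)=y$. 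This reduction of surjectivity of $E$ onto $(0,\infty)$ to surjectivity of $S$ onto $\Q$ is the crux; once it is in hand, the remaining items have already been secured.
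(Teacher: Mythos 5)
Your construction $E \de C+S$ is exactly the paper's, and your treatment of items \eqref{e2}, \eqref{esym} and \eqref{ediff} coincides with it; the divergence is in how you obtain items \eqref{e3}, \eqref{emon} and \eqref{eran}. For the range, the paper argues via intervals: $E$ is increasing on $[0,\infty)$ with $E(0)=1$ and dominates $C$, whose range is $[1,\infty)$, so $\ran E|_{[0,\infty)}=[1,\infty)$ by \ax{CONT}-Bolzano, and then the reflection identity $E(-t)E(t)=1$ gives $\ran E|_{(-\infty,0]}=(0,1]$. You instead exhibit an explicit preimage: for $y>0$ put $s\de(y^2-1)/(2y)$, pull it back through $S$ using $\ran S=\Q$ from item \eqref{sran}, and check $C(t)+S(t)=y$; this is more elementary (no intermediate value theorem is needed for $E$, only the algebra of the hyperbola and the already-established surjectivity of $S$), and it is correct --- one verifies $1+s^2=\bigl((y^2+1)/(2y)\bigr)^2$ and $C\ge 1$ forces the positive root. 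For monotonicity the paper simply adds the two functions that are increasing on $[0,\infty)$ and reflects through \eqref{esym}, whereas you pass through positivity of $E$ and the implication ``$E'=E>0$ everywhere $\Rightarrow E$ strictly increasing.'' That last implication is not automatic over an arbitrary ordered field: it requires a \ax{CONT}-version of the monotonicity criterion (a mean-value-type theorem for definable functions), which is available in the framework the paper relies on but which you should cite explicitly, since ``positive derivative implies increasing'' genuinely fails for non-definable functions over fields with gaps. With that lemma named, your argument is complete; your derivation of \eqref{e3} from monotonicity is then also fine, though the paper gets it more cheaply from $S(1)>0$ and $C(1)\ge 1$.
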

\noindent
Let the {\bf restriction} of function $f$ to set $H$ be defined as 
\begin{equation}
f\big|_H\de\{ \langle x,y\rangle : x\in\dom f\cap H \text{ and } y=f(x)\}
\end{equation}

\begin{proof}
Let $S:\Q\rightarrow\Q$ and $C:\Q\rightarrow\Q$ be the definable
differentiable functions which exist by Theorem~\ref{thm-sh}.  Let
$E:=C+S$. Then $E$ is a definable differentiable function since $C$ and
$S$ are so.  Items \eqref{e2} and \eqref{e3} follow directly from
Items \eqref{s2} and \eqref{s3} of Theorem~\ref{thm-sh}.
Item \eqref{esym} follows from Items \eqref{s1} and \eqref{ssym} of
Theorem~\ref{thm-sh} because
$E(-t)E(t)=\big(C(-t)+S(-t)\big)\big(C(t)+S(t)\big)=C^2(t)-S^2(t)=1$.
Item \eqref{eran} follows from Item \eqref{sdiff} of
Theorem~\ref{thm-sh}.
Item \eqref{ediff} follows because of the following.  $\ran
E|_{[0,\infty)}=[1,\infty)$ because $E(0)=1$, $S$ and $C$ are strictly
    increasing on $[0,\infty)$, and $\ran C=[1,\infty)$ by Item
        \eqref{sran} of Theorem~\ref{thm-sh}.  Hence $\ran
        E|_{(-\infty,0]}=(0,1]$ by Item \eqref{esym}. Thus $\ran
    E=(0,\infty)$.
Item \eqref{emon} follows from Item \eqref{sdiff} of
Theorem~\ref{thm-sh} since $E$ is strictly increasing on $[0,\infty)$
  by Item \eqref{smon} of Theorem~\ref{thm-sh} and $E$ is also
  strictly increasing on $(-\infty,0]$ since $E(-t)E(t)=1$ by Item
\eqref{esym}.
\end{proof}

The following first-order logic formula defines that {\bf limit of
  function} $f$ is $A$ at $x_0$ over every ordered field:
\begin{multline}
\limit(f,x_0,A)\defiff \forall \varepsilon>0\; \exists \delta >0\;\forall x
\\ \enskip 0<|x-x_0|<\delta\land  x\in \dom f\rightarrow 
|f(x)-A|<\varepsilon.
\end{multline}

In the case when there is one and only one $A$ such that
$\limit(f,x_0,A)$ holds, we write $\lim_{x\rightarrow x_0}f(x)=A$.  By
using the technique of \cite[\S 10]{SzPhd}, it can be proved that
there is at most one $A$ such that $\limit(f,x_0,A)$ holds if $x_0$ is
a accumulation point of $\dom f$ (i.e., if for all $\varepsilon>0$,
there is $x\in\dom f$ such that $|x-x_0|<\varepsilon$).

Let the exponential function of $\R$ be denoted by $\exp$.

\begin{proposition}\label{prop-exp}
Let $\langle\Q,+,\cdot,\le\rangle$ be a subfield of $\R$.\footnote{By
  Pickert--Hion Theorem, these fields are exactly the fields of
  Acrhiedean ordered fields, see, e.g., \cite[\S VIII]{fuchs},
  \cite[C.44.2]{CHA}.} Let $f:\Q\rightarrow\Q$ be a differentiable
function such that $f'=f$ and $f(0)=1$. Then $f=\exp\big|_{\Q}$, i.e.,
$f$ is the restriction of the real exponential function to $\Q$.
\end{proposition}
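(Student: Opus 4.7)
The plan is to prove $f(x)=\exp(x)$ for every $x\in\Q$ in two stages: first a local identification $f(x)=f(q_0)\exp(x-q_0)$ on some $\Q$-neighborhood of each $q_0\in\Q$, then a global propagation of this identity via a supremum argument in $\R$ (available because $\Q$ is a subfield of $\R$, hence Archimedean and dense in $\R$).

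For the local step, I would exploit that $f'=f$ forces $f\in C^\infty(\Q)$ with $f^{(k)}\equiv f$, so $f^{(k)}(q_0)=f(q_0)$ for every $k$, and the Taylor polynomial of $f$ at $q_0$ equals $T_n(x)=f(q_0)\sum_{k=0}^{n}(x-q_0)^k/k!$. Continuity of $f$ at $q_0$ (immediate from differentiability) supplies $\eta>0$ with $|f|\le M:=|f(q_0)|+1$ on $(q_0-\eta,q_0+\eta)\cap\Q$. A Taylor remainder estimate $|f(x)-T_n(x)|\le M|x-q_0|^{n+1}/(n+1)!$---proved by induction on $n$ using a mean value inequality for $\Q$-differentiable functions---then yields $T_n(x)\to f(q_0)\exp(x-q_0)$ as $n\to\infty$, giving the local identification on $(q_0-\eta,q_0+\eta)\cap\Q$.

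For the global step, applying the local identification at $q_0=0$ with $f(0)=1$ gives $f(x)=\exp(x)$ on a $\Q$-neighborhood of $0$. Letting $S:=\{R\ge 0:f(x)=\exp(x)\text{ for all }x\in[-R,R]\cap\Q\}$ and $R^*:=\sup S\in\R\cup\{\infty\}$, a contradiction from $R^*<\infty$ follows by choosing $q_0\in\Q\cap S$ close enough to $R^*$ and applying the local step at $q_0$: this produces $f(x)=f(q_0)\exp(x-q_0)=\exp(q_0)\exp(x-q_0)=\exp(x)$ on a $\Q$-neighborhood of $q_0$ extending past $R^*$, contradicting maximality. Hence $R^*=\infty$ and $f=\exp|_\Q$.

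The main obstacle will be the Taylor remainder bound, which rests on a mean value inequality for $\Q$-differentiable functions; the standard mean value theorem argument can fail over a subfield $\Q\subseteq\R$ because the intermediate point may lie in $\R\setminus\Q$. I plan to circumvent this with a $\sup$-type argument in $\R$: for a differentiable $h\colon[a,b]\cap\Q\to\R$ with $|h'|\le K$, take the supremum in $\R$ of $\{t\in[a,b]\cap\Q:|h(t)-h(a)|\le(K+\varepsilon)(t-a)\}$ and use the local $\varepsilon$-linearization of $h$ at $\Q$-points close to this supremum to push the set past it, eventually placing $b$ in the set and yielding the inequality $|h(b)-h(a)|\le K(b-a)$.
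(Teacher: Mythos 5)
Your plan has a genuine gap, and it is concentrated in the lemma you rely on to make everything else work: the mean value inequality for $\Q$-differentiable functions is \emph{false} over an incomplete subfield $\Q\subsetneq\R$, and your sup-based proof of it breaks at exactly the critical point. Take $\alpha\in\R\setminus\Q$ with $a<\alpha<b$, $a,b\in\Q$, and let $h\colon[a,b]\cap\Q\to\Q$ be $0$ below $\alpha$ and $1$ above $\alpha$. Every point of $[a,b]\cap\Q$ has a $\Q$-neighborhood on which $h$ is constant, so $h$ is differentiable with $h'\equiv 0$ (hence $|h'|\le K$ with $K=0$), yet $|h(b)-h(a)|=1>K(b-a)$. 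In your sup argument the relevant set is $[a,\alpha)\cap\Q$ with supremum $\alpha\notin\Q$: there is no point of $\dom h$ at the supremum at which to invoke the local $\varepsilon$-linearization, and linearizing at points $t_0<\alpha$ only controls $h$ on radii $\delta(t_0)\le\alpha-t_0$, which never reach past $\alpha$. So the ``push the set past the supremum'' step fails whenever the supremum is irrational, and no repair is possible at that level of generality because the inequality itself is false. This undermines the Taylor remainder estimate and hence the local identification $f(x)=f(q_0)\exp(x-q_0)$. The same incompleteness phenomenon also infects your global step independently: the radius $\eta(q_0)$ on which continuity bounds $|f|$ has no uniform lower bound as $q_0\to R^*$ (over $\rac$ the function $1/(t^2-2)$ is differentiable at every rational yet unbounded near $\sqrt 2$), so ``$q_0$ close enough to $R^*$'' need not yield a neighborhood extending past $R^*$, especially when $R^*\notin\Q$.

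The real content of the proposition is precisely that $f'=f$ together with $f(0)=1$ forbids this kind of bad behavior at the irrational ``gaps,'' so some step must transfer the problem to the complete field $\R$ before any mean-value or propagation argument is run. That is what the paper's proof does: it defines $f_*(x):=\lim_{t\to x}f(t)$ on all of $\R$, shows by comparing difference quotients of $f_*$ at real points with those of $f$ at nearby $\Q$-points that $f_*$ is differentiable with $f_*'=f_*$, and then invokes the classical uniqueness theorem for $y'=y$, $y(0)=1$ over $\R$. To salvage your approach, the work has to go into proving that $f$ has a finite limit at every real point (equivalently, that $f$ satisfies a Cauchy condition near each real number) --- i.e., into constructing $f_*$; once $f_*$ exists on $\R$ with $f_*'=f_*$, either your Taylor argument or the one-line observation that $(f_*(x)e^{-x})'=0$ finishes the proof, and the mean value theorem is then available because you are working over $\R$.
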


\begin{proof}
We have that $f$ is continuous since $f$ is differentiable, see, e.g.,
\cite[Cor.10.3.5]{SzPhd}.  Let $f_*(x)\de \lim_{t\rightarrow
  x}f(t)$. Function $f_*$ is well defined since $f$ is continuous and
$\Q$ is dense in $\R$ (as $\rac\subseteq\Q$).  Since $f$ is
continuous, we also have that $f_*$ is an extension of $f$, i.e.,
$f_*(x)= f(x)$ if $x\in\Q$. We are going to show that $f_*=\exp$.
First we show that $f_*'(x)=f_*(x)$. We start by showing that, for all
$x,y\in\R$ and $\varepsilon_0>0$, there are $x^*,y^*\in\Q$ such that
$|x-x^*|<\varepsilon_0$, $|y-y^*|<\varepsilon_0$, and
\begin{equation}
\left|\frac{f(x^*)-f(y^*)}{x^*-y^*}-\frac{f_*(x)-
  f_*(y)}{x-y}\right|<\varepsilon_0.
\end{equation}
By the triangle inequality, 
\begin{multline}
\left|\frac{f_*(x)-f_*(y)}{x-y}-\frac{f(x^*) -f(y^*)}{x^*-y^*} \right|
 \le\\\left|\frac{f_*(x)- f_*(y)}{x-y}- \frac{f(x^*)-
  f(y^*)}{x-y}+ \frac{f(x^*) -f(y^*)}{x-y} -
\frac{x-y}{x^*-y^*} \cdot\frac{f(x^*)-f(y^*)}{x-y}\right| \\\le
\left|\frac{f_*(x)- f(x^*)}{x-y}\right| +
+\left|\frac{f_*(y)-f(y^*)}{x-y}\right| +\left|1-\frac{x-y}{x^*-y^*}
\right|\cdot\left|\frac{f(x^*)-f(y^*)}{x-y}\right|.
\end{multline}
By the definition of $f_*$, there is a $\delta$ such that
\begin{equation}
|f_*(x)-f(x^*)|<\frac{\varepsilon|x-y|}{3} \enskip \text{ and
} \enskip |f_*(y)- f(y^*)|<\frac{\varepsilon|x-y|}{3}
\end{equation}
 if $|x-y|<\delta$.  From this, by the triangle inequality, we have
 that
\begin{multline}
|f(x^*)-f(y^*)|\le |f(x^*)-f_*(x)| + |f_*(x)-f_*(y)| + | f_*(y)-
f(y^*)|\\< |f_*(x)-f_*(y)|+ \frac{2\varepsilon_0|y-x|}{3}.
\end{multline}

Since $\left|1-\frac{x-y}{x^*-y^*}\right|$ can be arbitrarily small if
$|x-x^*|$ and $|y-y^*|$ are small enough, we can choose $x^*$ and
$y^*$ such that
\begin{equation}
\left|1-\frac{x-y}{x^*-y^*}\right|\cdot\left|\frac{f(x^*)-f(y^*)}{x-y}\right|<\frac{\varepsilon_0}{3}.
\end{equation}
Therefore, there are $x^*$ and $y^*$ arbitrarily close to $x$ and $y$
such that
\begin{equation}\label{eq-0}
 \left|\frac{f_*(x)-f_*(y)}{x-y}-\frac{f(x^*)-f(y^*)}{x^*-y^*}\right|<\varepsilon_0.
\end{equation}
To prove that $f'_*=f_*$, We have to show that, for all
$\varepsilon>0$, there is a $\delta>0$ such that
\begin{equation}\label{eq-df}
 \left|f_*(x)-\frac{f_*(x)-f_*(y)}{x-y}\right|<\varepsilon
\end{equation}
if $|x-y|<\delta$. 
By the triangle inequality,
\begin{multline}\label{eq-tri}
 \left|f_*(x)-\frac{f_*(x)-f_*(y)}{x-y}\right|\le |f_*(x)-f(x^*)| \\ +\left| f(x^*)-\frac{f(x^*)-f(y^*)}{x^*-y^*}\right|+\left|\frac{f(x^*)-f(y^*)}{x^*-y^*}  -\frac{f_*(x)-f_*(y)}{x-y}\right|.
\end{multline}
By the definition of $f_*$,  
\begin{equation}\label{eq-a}
|f_*(x)-f(x^*)|<\frac{\varepsilon}{3}
\end{equation}
if $|x-x^*|$ small enough.
By \eqref{eq-0}, we have that there, are $x^*$ and $y^*$ arbitrarily
close to $x$ and $y$ such that,
\begin{equation}\label{eq-b}
  \left|\frac{f(x^*)-f(y^*)}{x^*-y^*}  -\frac{f_*(x)-f_*(y)}{x-y}\right|<\frac{\varepsilon}{3}.
\end{equation}
Since $f'=f$, we have that 
\begin{equation}\label{eq-diff}
\left| f(x^*)-\frac{f(x^*)-f(y^*)}{x^*-y^*}\right|<\frac{\varepsilon}{3}
\end{equation}
if $|x^*-y^*|$ is small enough. So, if $|x-y|$ is small enough and we
can choose $x^*$ and $y^*$ close enough to $x$ and $y$ we have that
Eq.~\eqref{eq-diff} holds. Consequently, if $|x-y|$ is small
enough, then Eq.~\eqref{eq-df} holds, i.e., $f_*$ is
differentiable and $f_*'=f_*$.  Therefore, there is a $c\in\R$ such
that $f_*(x)=c \exp(x)$ for all $x\in\R$. We have that $c=1$ since
$c=c\exp(0)=f_*(0)=f(0)=1$. Therefore, $f$ is the restriction of
function $\exp$ to $\Q$; and this is what we wanted to prove.
\end{proof}

\begin{proof}[Proof of Theorem\ref{thm-noalg}]\label{proof-noalg}
By Theorem~\ref{thm-exp}, a differentiable function $E$ is definable
in the models of \ax{AccRel_d} + \ax{Ax\exists UnifOb} such that $E'=E$
and $E(0)=1$.  By Proposition~\ref{prop-exp}, $E$ has to be the
restriction of $\exp$ to the real algebraic numbers. However, this is
impossible since then $E(1)$ is the Euler-number $e$ which is not an
algebraic number.
\end{proof}

\section{Acknowledgments}
This research is supported by the Hungarian Scientific Research Fund
for basic research grants No.~T81188 and No.~PD84093.

\bibliography{LogRelBib} \bibliographystyle{plain}

\end{document}